\newtheorem{remark}{Remark}
\newtheorem{defi}{Definition}
\newtheorem{example}{Example}
\newtheorem{theom}{Theorem}
\newtheorem{prop}{Proposition}
\newtheorem{lem}{Lemma}
\newcommand{\rmjj}[1]{{\color{black} #1}}
\newcommand{\rmj}[1]{{\color{black} #1}}
\newcommand{\mrr}[1]{{\color{black} #1}}
\newcommand{\aaa}[1]{{\color{black} #1}}
\newcommand{\re}{{\mathbb R}}
\newcommand{\n}{{\mathbb N}}
\newcommand{\vectun}{{e }}
\newcommand{\fulllanguage}{{\tt full language problem}}
\newcommand{\setmat}{{\Sigma{}}}
\DeclareRobustCommand{\qed}{%
  \ifmmode % if math mode, assume display: omit penalty etc.
  \else \leavevmode\unskip\penalty9999 \hbox{}\nobreak\hfill
  \fi
  \quad\hbox{\qedsymbol}}
\newcommand{\openbox}{\leavevmode
  \hbox to.77778em{%
  \hfil\vrule
  \vbox to.675em{\hrule width.6em\vfil\hrule}%
  \vrule\hfil}}
\newcommand{\qedsymbol}{\openbox}
\newenvironment{proof}[1][\proofname]{\par
  \normalfont
  \topsep6\p@\@plus6\p@ \trivlist
  \item[\hskip\labelsep\itshape
    #1.]\ignorespaces
}{%
  \qed\endtrivlist
}
\newcommand{\proofname}{Proof}
\begin{document}

\title{A Characterization of Lyapunov Inequalities for Stability of Switched Systems} % Title, preferably not more than 10
\author{Rapha\"{e}l M. Jungers, Amir Ali Ahmadi,\\ Pablo A. Parrilo and Mardavij Roozbehani}
%\begin{frontmatter}
%\thanks[footnoteinfo]{R. Jungers is a F.R.S.-FNRS fellow.}
%\author[First]{Amir Ali Ahmadi}
%\author[Second]{Rapha\"{e}l M. Jungers} 
%\author[First]{Pablo A. Parrilo}
%\author[First]{Mardavij
%Roozbehani}
%\address[First]{Laboratory for
%Information and Decision Systems, Massachusetts Institute of
%Technology.  \{\texttt{a\_a\_a}, \texttt{parrilo},
%\texttt{mardavij}\}\texttt{@mit.edu}}           
%\address[Second]{ICTEAM Institute, Universit\'e catholique de Louvain, Belgium, {\texttt{raphael.jungers@uclouvain.be}}.}

%                                          % help of the Autom         
%\begin{keyword}                           % Five to ten keywords,  
%Hybrid systems, convex optimization, finite automata, Linear Matrix Inequalities, Joint Spectral Radius              % chosen from the IFAC 
%\end{keyword}                             % keyword list or with the 
% atica 

\maketitle
\begin{abstract}      We study stability criteria for discrete-time \mrr{switched} systems \rmj{and provide a meta-theorem that characterizes all Lyapunov theorems \aaa{of a certain canonical type}. For this purpose,} we investigate the structure of sets of LMIs that provide a sufficient condition for stability.  \rmj{Various such conditions have been proposed in the literature in the \mrr{past} fifteen years. We prove in this note that a family of language-theoretic conditions recently provided by the authors encapsulates all the possible \mrr{LMI} conditions, thus putting a conclusion to this research effort.}\\  As a corollary, we show that it is PSPACE-complete to recognize whether a particular set of LMIs implies stability of a \mrr{switched} system.  Finally, we provide a geometric interpretation of these conditions, in terms of existence of an invariant set.
\end{abstract}

%\end{frontmatter}
\section{Introduction} In this note, we study the structure and properties of stability conditions for \aaa{discrete time} \mrr{switched linear} systems. \aaa{These are systems that evolve according to the update rule:}
\begin{equation}\label{eq-switching}
x_{k+1}=A_{\sigma\left(k\right)  }x_{k},
\end{equation} 
\mrr{where the function $$\sigma(\cdot): \n\rightarrow \{1,\dots, m\} $$ is the \emph{switching signal} that determines which \aaa{square} matrix from the set $\setmat\mathrel{\mathop:}=\left\{ A_{1},...,A_{m}\right\} $ is applied to update the state at each time step}.  Thus, \mrr{the trajectory depends} on the particular values of the switching signal at times $k=1,2,\dots.$ \mrr{Switched} systems are a popular model for many different engineering applications.  
As a few examples, applications ranging from \aaa{viral disease treatment} optimization (\cite{hmcb10}) to \aaa{multi-hop} networks control (\cite{pappas-multihop}), or trackability of autonomous agents in sensor networks (\cite{cresp}) have been modeled with \aaa{switched linear} systems.  \rmjj{See also the survey \cite{garulli2012survey} for applications in e.g. video segmentation.}

\aaa{Unlike linear systems, many associated analysis and control problems for switched linear systems are known to be very hard to solve} (see \cite{BlTi3, jungers_lncis} and references therein). Among these, the \emph{Global Uniform Asymptotic Stability (GUAS) problem} \aaa{is a particularly fundamental and highly-studied question}. \aaa{We say that system \eqref{eq-switching}} is GUAS if all trajectories $x(t)$ tend to zero as $t\rightarrow \infty,$ \mrr{irrespective of the} switching law $\sigma(k)$. Because \mrr{switched} linear systems are \mrr{1-homogeneous}, there is no distinction between \mrr{local and global, or asymptotic and exponential} stability, \aaa{and for brevity we refer to the GUAS notion simply as \emph{stability} from here on}. \aaa{This stability property is fully} encapsulated in the so-called \emph{Joint Spectral Radius} (JSR) of the set $\setmat,$ which is defined
as
\begin{equation} \rho\left(\setmat\right)
=\lim_{k\rightarrow\infty}\max_{\sigma
\in\left\{  1,...,m\right\}  ^{k}}\left\Vert A_{\sigma_{k}}...A_{\sigma_{2}%
}A_{\sigma_{1}}\right\Vert ^{1/k}.\label{eq-def.jsr}%
\end{equation}

This quantity is independent of the norm
used in (\ref{eq-def.jsr}), and is smaller than \mrr{1} if and only if the system is stable. See \cite{jungers_lncis} for a recent survey on the topic. \mrr{In recent years much effort has been devoted to approximating this quantity}.  One of the most successful families of techniques for approximating the JSR \aaa{consists of} writing down a set of \aaa{inequalities}, whose parameters depend on the matrices defining the system, and which admit a solution \aaa{only if} the system is stable.  (That is, a solution to these \aaa{inequalities} is a \emph{certificate} for the stability of the system.) These \aaa{inequalities} are stated in terms of \aaa{linear programs, or semidefinite/sum of squares programs,} so that they can be solved with modern efficient convex optimization methods, like interior point methods (see for instance \cite{JohRan_PWQ,multiple_lyap_Branicky,AAA_MS_Thesis,Roozbehani2008,daafouzbernussou,bliman-trecate03,LeeD06,convex_conjugate_Lyap,Pablo_Jadbabaie_JSR_journal,protasov-jungers-blondel09}). 
Perhaps the simplest criterion in this family can be traced back to Ando and Shih \cite{ando-shih}, who proposed the following \aaa{inequalities}, as a stability certificate for a \mrr{switched system} described by a set of matrices\footnote{We note \aaa{by} $P\succ 0$ the constraint that $P$ is a symmetric, \aaa{positive definite} matrix. Also, throughout the note, we denote the transpose of the matrix $A$ by $A^{Tr},$ in order to avoid confusion with the power of a matrix (or a matrix set).} $\{A_i\}$ :
\begin{equation}\label{eq-Lyap.CQ.SDP}
\begin{array}{rll}
 A_i^{Tr}PA_i&\prec&P \quad i=1,\ldots,m.\\
P&\succ&0.
\end{array}
\end{equation}
It is \aaa{easily seen} that if these inequalities have a solution $P,$ then the function $x^{Tr}Px$ is a common quadratic Lyapunov function, meaning that this function decreases, \aaa{for all switching signals}.  This proves the following folklore theorem:
\begin{theom}
If a set of matrices $\setmat\mathrel{\mathop:}=\left\{ A_{1},...,A_{m}\right\} $ is such that the \aaa{inequalities in} (\ref{eq-Lyap.CQ.SDP}) have a \aaa{solution}, then the set is stable.
\end{theom}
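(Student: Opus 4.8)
The plan is to exhibit $V(x)=x^{Tr}Px$ as a common quadratic Lyapunov function and to show that it forces every trajectory to decay geometrically at a rate that does not depend on the switching signal. First I would turn the strict (open) LMIs into a single uniform contraction estimate. Since $P\succ 0$, write $M_i := P^{-1/2}A_i^{Tr}PA_iP^{-1/2}$, which is symmetric and positive semidefinite; the inequality $A_i^{Tr}PA_i\prec P$ is equivalent to $M_i\prec I$, hence $\gamma_i:=\lambda_{\max}(M_i)<1$ and $A_i^{Tr}PA_i\preceq\gamma_i P$. Because $\setmat$ contains only finitely many matrices, $\gamma:=\max_{i=1,\dots,m}\gamma_i<1$, and $A_i^{Tr}PA_i\preceq\gamma P$ for every $i$.

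Next, fix an arbitrary switching signal $\sigma$ and the associated trajectory $(x_k)$ of \eqref{eq-switching}. Then $V(x_{k+1})=x_k^{Tr}A_{\sigma(k)}^{Tr}PA_{\sigma(k)}x_k\le\gamma\,x_k^{Tr}Px_k=\gamma V(x_k)$, and iterating gives $V(x_k)\le\gamma^k V(x_0)$. Using the two-sided bound $\lambda_{\min}(P)\|x\|^2\le V(x)\le\lambda_{\max}(P)\|x\|^2$ (valid since $P\succ 0$), I obtain $\|x_k\|^2\le\bigl(\lambda_{\max}(P)/\lambda_{\min}(P)\bigr)\gamma^k\|x_0\|^2$, so $x_k\to 0$ as $k\to\infty$. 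Crucially, both the prefactor and the rate $\sqrt{\gamma}<1$ are independent of $\sigma$, which is precisely what GUAS (i.e., stability in the sense of the excerpt) requires.

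The only step that warrants care is the passage from the strict inequalities to one uniform rate $\gamma<1$: this is what upgrades pointwise decay to decay that is uniform over all switching laws, and here it is immediate only because $m$ is finite (for a compact infinite family of matrices one would instead invoke continuity of $A\mapsto\lambda_{\max}(P^{-1/2}A^{Tr}PAP^{-1/2})$ together with compactness). Beyond that, the argument is a routine telescoping estimate, so I do not anticipate a genuine obstacle; the proof is short and self-contained.
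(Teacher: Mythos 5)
Your proof is correct and follows essentially the same route as the paper, which simply observes that $x^{Tr}Px$ is a common quadratic Lyapunov function decreasing along all switching signals; you have merely filled in the standard details (extracting a uniform contraction factor $\gamma<1$ from the finitely many strict LMIs and telescoping to get geometric decay of $\|x_k\|$). No issues.
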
 

\aaa{The conditions in (\ref{eq-Lyap.CQ.SDP}) form a set of linear matrix inequalities (LMIs).} Other \aaa{types of} methods have been proposed to tackle the stability problem (e.g. variational methods \aaa{\cite{MM11}}, or iterative methods \aaa{\cite{GZalgorithm}}), but a great advantage of \aaa{LMI-based} methods is that (i) they offer a simple criterion that can be checked with the help of the powerful tools available for solving convex \aaa{(and in particular semidefinite)} programs, and \aaa{(ii) they} often come with a guaranteed accuracy \aaa{of approximation on the JSR} (see \cite{JSR_path.complete_journal}).
 
 \rmj{Recently, we proposed a whole family of \aaa{LMI-based, stability-proving} conditions and proved that they generalize all the previously proposed \aaa{criteria} that we were aware of (\cite{ajprhscc11,JSR_path.complete_journal}).}
\rmj{In this note, we \aaa{\emph{do not}} provide new criteria for stability.  Rather, we prove that the class of conditions recently proposed by us encapsulates all possible conditions \aaa{in a very broad and canonical family of LMI-based conditions (see Definition~\ref{def:Lyap.ineq}).} \rmjj{Note that in the conference version of this note \cite{ajprrocond}, we developed a proof for another (weaker) class of conditions, valid only for nonnegative matrices, which we call `entrywise-comparison Lyapunov functions.'}

%, in a sense that we will make precise.
%The rest of the paper is as follows: In Section \ref{section-path-complete}, we recall the family of conditions that we recently introduced and present the basic combinatorial construction at its heart, namely the \emph{path-complete graphs}.  Then, in Section \ref{section-preliminaries} we present a result which is weaker than our general result, but whose proof is shorter and easier to understand, and which contains the main idea. 

In Section \ref{section-path-complete}, we recall the description of this class of conditions, namely the \emph{path-complete graph conditions}. In Section \ref{section-main}, we present our main result: no other \aaa{class of inequalities} than the ones \aaa{presented} in \cite{JSR_path.complete_journal} can be a valid stability criterion.  We then show that our result implies that recognizing if a set of LMIs is a valid criterion for stability is PSPACE-complete. In Section \ref{sec:invariant.sets}, \aaa{we further show how one can construct a single common Lyapunov function (or a geometric invariant set) for system (\ref{eq-switching}) given a feasible solution to the inequalities coming from \emph{any} path-complete graph.} 
% we further analyze the nature of the algebraic conditions, and give a geometric interpretation of them.
\aaa{We end with a few concluding remarks in Section~\ref{section-conclusion}.}

%In Section \ref{section-conclusion}, we conclude and point out some possible \aaa{future} work.

\section{Stability conditions generated by path-complete graphs}\label{section-path-complete}}
Starting with the LMIs \aaa{in} (\ref{eq-Lyap.CQ.SDP}), many researchers have provided other criteria, based on semidefinite programming, for proving stability of \mrr{switched systems}.
 % deleted by MR : In all these methods, the stability criterion consists in verifying a set of \emph{Lyapunov Inequalities,} which we now formally define.  
The different methods amount to \mrr{writing down} \aaa{different sets} of \mrr{inequalities---formally defined below as \emph{Lyapunov \aaa{inequalities}}---}which, \aaa{if satisfied by a set of functions, imply that the set $\setmat$ is stable.}

%which are parameterized by the values of the entries of the matrices in $\setmat.$ If \mrr{the inequalities} have a solution, then 
%it implies that
%the set $\setmat$ is stable.

%deleted by MR
%\begin{defi}
%We call a \emph{Lyapunov function} any continuous, positive, and homogeneous function
%$V(x):\mathbb{R}^n\rightarrow\mathbb{R}.$\end{defi}
 
\mrr{
\begin{defi}\label{def:Lyap.ineq}
Given a \mrr{switched system} of the \mrr{form} (\ref{eq-switching}), a \emph{Lyapunov inequality} is a quantified inequality of the \mrr{form}:
\rmjj{\begin{equation}\label{eq:LI} \forall x\in \re^n\aaa{\backslash\{0\}}, V_j(Ax)< V_i(x),  \end{equation}}
where the functions $V_i,V_j:\mathbb{R}^n\rightarrow\mathbb{R}$ are \emph{Lyapunov functions} (\aaa{always taken to be continuous, positive definite \rmjj{(i.e. $V(0)=0$ and $\forall x\neq 0, V(x)>0$)}, and homogeneous functions in this note}), and \aaa{the matrix} $A$ is a \aaa{finite product out of the} matrices in $\setmat.$ In the special case that the Lyapunov functions are quadratic \aaa{forms,} we refer to \eqref{eq:LI} as \aaa{a} quadratic Lyapunov \aaa{inequality}. 
\end{defi}
}

\aaa{We are interested in the problem of characterizing which finite sets of Lyapunov inequalities of the type (\ref{eq:LI}) imposed among a (finite) set of Lyapunov functions $V_1,\ldots, V_k$ imply stability of system (\ref{eq-switching}). The simplest example of such a set is the quadratic Lyapunov inequalities presented (in LMI notation) in (\ref{eq-Lyap.CQ.SDP}). Here, there is only a single Lyapunov function, and the matrix products out of the set $\Sigma$ are of length one.}

%As an example, the simple relations (\ref{eq-Lyap.CQ.SDP}) represent quadratic Lyapunov inequalities with $V_j(x)=V_i(x)=x^{Tr}Px.$

 %because of the well-known property $$P\succeq 0\quad \Leftrightarrow \quad \forall x, x^{Tr}Px \geq 0. $$  
 %Deleted by MR: We call such Lyapunov inequalities with PSD matrices and semidefinite inequalities \emph{quadratic Lyapunov inequalities.}

\mrr{
\begin{remark}\label{remark-approx} \aaa{Due to} the homogeneity of the definition of the JSR \aaa{under matrix scalings (see (\ref{eq-def.jsr}))}, one can derive an upper bound $\gamma^*$ on the joint spectral radius by applying the Lyapunov \aaa{inequalities} to the \emph{scaled} set of matrices \begin{equation}\label{eq-scaled-set} \setmat/\gamma =\{ A/\gamma: A\in \setmat\}, \end{equation} and taking $\gamma^*$ to be the minimum $\gamma$ such that (\ref{eq-scaled-set}) satisfies the inequalities \aaa{for some Lyapunov functions within a certain class (e.g., quadratics, quartics, etc.)}. The maximal real number $r$ satisfying \rmjj{$r\gamma^*\leq \rho(\setmat) \leq \gamma^*, $} for any arbitrary set of matrices then provides a \aaa{worst-case guarantee on the quality of approximation of a} particular set of Lyapunov inequalities.
In particular, it is
known \aaa{\cite{ando-shih}} that the estimate $\gamma^*$ obtained
with \eqref{eq-Lyap.CQ.SDP} satisfies
\begin{equation}\label{eq-CQ.bound}
\frac{1}{\sqrt{n}}\gamma^* \leq\rho(\setmat)\leq\gamma^* ,
\end{equation}
where $n$ is the dimension of the matrices.  
%deleted by MR : The reason why more Lyapunov inequalities have been introduced in the literature cited above is that for some other criteria, one can prove that the value $r$ is larger than $\frac{1}{\sqrt{n}}.$
\end{remark}
}

In \aaa{a} recent paper \cite{JSR_path.complete_journal}, \rmj{we} have presented a framework in which all these methods find a common generalization.
Roughly speaking, the idea behind this general framework is that a set of Lyapunov inequalities \rmjj{actually characterizes} a set of switching signals for which the trajectory remains stable.  Thus, a \aaa{stability-proving} set of Lyapunov inequalities must encapsulate all the possible switching signals and provide \mrr{an abstract stability proof for all of these trajectories}.  One contribution of \cite{JSR_path.complete_journal} is to provide a way to represent such a set of Lyapunov inequalities with a directed labeled graph which represents all the switching signals that, \mrr{by virtue of the Lyapunov inequalities, are  guaranteed to make the system converge to zero.}  \mrr{Thus, in order to determine whether the corresponding set of inequalities is a sufficient condition for stability, \rmjj{one only has to check} that all the possible switching signals are represented in the graph.}

% deleted by MR: We now formally describe the construction and the result.

In the following, by a slight abuse of notation, the same symbol $\setmat$ can represent a set of matrices, or an alphabet of abstract characters, corresponding to each of the matrices. Also, for any alphabet $\setmat,$ we note $\setmat^*$ (resp. $\setmat^t$) the set of all words on this alphabet (resp. the set of words of length $t$). Finally, for a word $w\in \setmat^t,$ we \aaa{let $A_w$ denote} the product corresponding to $w:$ $A_{w_1}\dots A_{w_t}$.

We represent a set of Lyapunov inequalities on a directed
labeled graph $G.$ Each node of \aaa{$G$} corresponds to a \aaa{single} Lyapunov function
$V_i$ and each \aaa{of its edges, which is
labeled by a finite product of matrices from $\Sigma$, i.e., by a word from the
set $\setmat^*$, represents a single Lyapunov inequality.} As illustrated in Figure~\ref{fig-node.arc}, for any word $w\in \setmat^*,$ and any Lyapunov inequality of the \mrr{form}
\begin{equation}\label{eq-lyap.inequality.rule}
 \forall x\in\mathbb{R}^n\aaa{\backslash\{0\}},\quad V_j(A_wx)< V_i(x),
\end{equation}
we add an arc going from node $i$ to node $j$ labeled with the word
$\bar w$ (the \emph{mirror} $\bar w$ of a word $w $ is the word obtained by reading $w$ \mrr{backwards}).  So, for a particular set of Lyapunov inequalities, there are as many nodes in the graph as there are (unknown) \aaa{Lyapunov} functions $V_i,$ and as many arcs as there are inequalities.

\begin{figure}[ht]
\centering \scalebox{.1} {\includegraphics{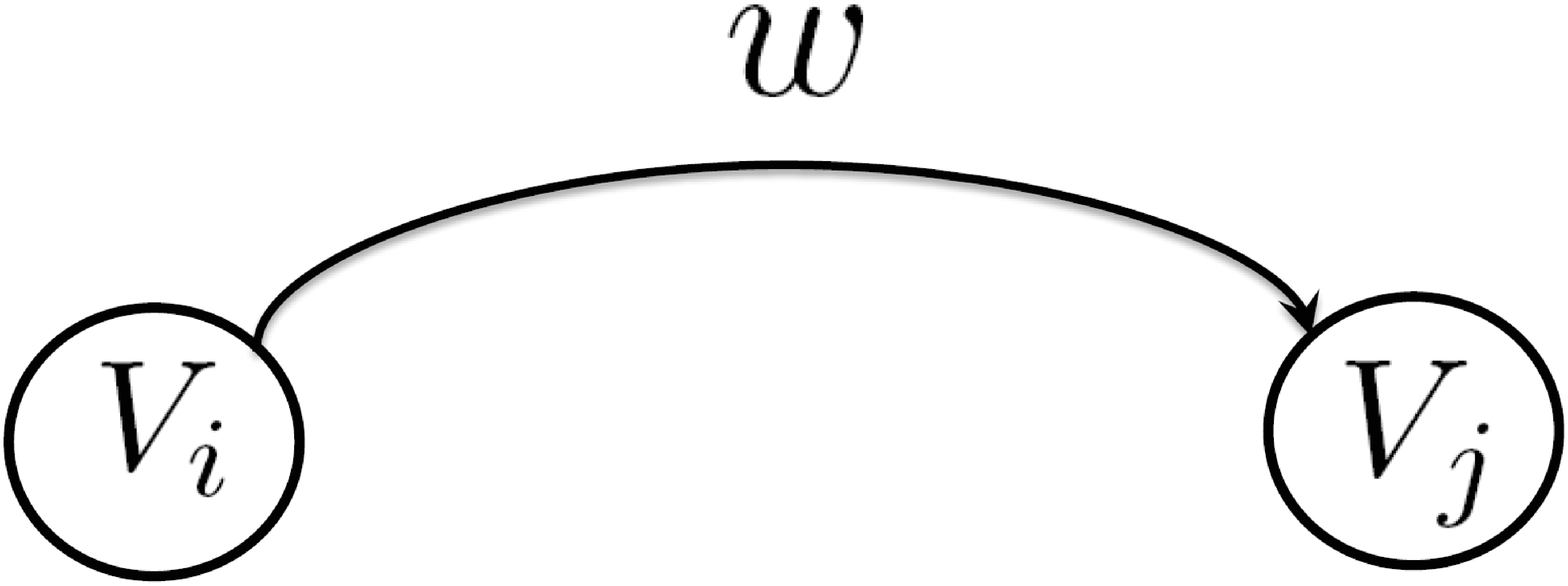}}
\caption{Graphical representation of a \aaa{single Lyapunov inequality}. \rmjj{The
graph above corresponds to the Lyapunov inequality $V_j(A_{\bar w}x)<
V_i(x)$. Here, $A_{\bar w}$ can be a single matrix from $\setmat$ or
a finite product of matrices from $\setmat$.}}
\label{fig-node.arc}
\end{figure}
\aaa{The reason for this construction is that we will reformulate the question of testing whether a set of Lyapunov inequalities provides a sufficient condition for stability, as that of checking a certain property of this graph. This brings us to the notion of \emph{path-completeness}, as defined in~\cite{JSR_path.complete_journal}.}
%The reason for this construction is that there is a direct way of checking on $G$ whether the set of Lyapunov inequalities implies the stability.  \mrr{Before presenting it, we need one more definition:}

\begin{defi}\label{def-path-complete}
Given a directed graph $G$ whose arcs are labeled with words
from the set $\setmat^*$, we say that the graph is
\emph{path-complete} if for any finite word $w_1\dots w_k$ of any length $k$ (i.e., for all words in
$\setmat^*$), there is a directed path in $G$ such that the word obtained by concatenating the labels of the edges on this path contains the word $w_1\dots w_k$ as a subword.
\end{defi}
%We are now able to state the criterion for validity of a set of LMIs:
\aaa{The connection to stability is established in the following theorem:}

\aaa{
\begin{theom}\label{thm-path.complete.implies.stability}(\cite{JSR_path.complete_journal})
Consider a set of matrices
$\Sigma=\{A_1,\ldots,A_m\}$. Let $G$ be a path-complete
graph whose edges are labeled with words from
$\Sigma^*$. If there exist Lyapunov functions
$V_i$, one per node of the graph, that satisfy the Lyapunov inequalities represented by each edge of the graph, then the switched system in (\ref{eq-switching}) is stable.
\end{theom}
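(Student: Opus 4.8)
The plan is to establish the statement in the following quantitative form: there is a $\lambda<1$ with $\rho(\setmat)\le\lambda^{1/d}<1$, where $d$ is the (common) degree of homogeneity of the Lyapunov functions; by the characterization recalled in the introduction this is exactly stability of \eqref{eq-switching}. The first preliminary is a normalization. All $V_i$ are continuous and positive definite, and — treating connected components of $G$ separately and using that an inequality of the form \eqref{eq-lyap.inequality.rule} between homogeneous functions of different degrees forces the relevant matrix product to be zero — we may take them homogeneous of one common degree $d>0$. Restricting the $V_i$ to the unit sphere produces constants $0<\alpha\le\beta<\infty$ with $\alpha\|x\|^d\le V_i(x)\le\beta\|x\|^d$ for all $i$ and all $x\in\re^n$; and for each edge the continuous ratio $V_j(A_wx)/V_i(x)$, being strictly below $1$ on the unit sphere, is bounded there by some $\lambda_e<1$, so (finitely many edges) every edge obeys $V_j(A_wx)\le\lambda\, V_i(x)$ with $\lambda:=\max_e\lambda_e<1$.

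Next I would reduce to the case where every label is a single letter. An edge $i\xrightarrow{a_1\cdots a_p}j$ with $p\ge 2$ is replaced by a path $i\to m_1\to\cdots\to m_{p-1}\to j$ of single-letter edges spelling $a_1,\dots,a_p$, with $p-1$ fresh nodes carrying $V_{m_r}(z):=\theta_r\,V_j(A_{a_p a_{p-1}\cdots a_{r+1}}z)+\varepsilon_r\|z\|^d$, where $1=\theta_p<\theta_{p-1}<\cdots<\theta_1<1/\lambda$ is a geometric progression and the $\varepsilon_r>0$ are taken small and decreasing as $r$ runs from $p-1$ down to $1$. The $\varepsilon_r\|z\|^d$ terms make each $V_{m_r}$ positive definite, homogeneity and continuity are inherited, composing the $p$ new single-letter inequalities along the path returns exactly the original inequality of the edge, and one checks by a direct computation that, for an appropriate choice of the parameters, every new single-letter inequality is strict. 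The graph $G'$ so obtained is still path-complete (subdividing edges does not change the word spelled along a path) and carries a valid family of strict single-letter Lyapunov inequalities, to which the normalization above applies with new constants $\alpha,\beta,\lambda$.

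Finally, for a single-letter graph $G'$ the argument closes cleanly. Given any word $w=w_1\cdots w_t\in\setmat^t$, path-completeness provides a directed path of $G'$ whose concatenated label contains the mirror word $\bar w$; since labels have length one, this occurrence is spelled by a genuine sub-path $i_0,i_1,\dots,i_t$ with exactly $t$ edges, bearing the labels $w_t,w_{t-1},\dots,w_1$. Chaining the inequalities $V_{i_r}(A_{w_{t+1-r}}z)\le\lambda\,V_{i_{r-1}}(z)$ along it gives $V_{i_t}(A_wx)\le\lambda^t V_{i_0}(x)$ for all $x$, hence $\|A_w\|\le(\beta/\alpha)^{1/d}\lambda^{t/d}$. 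As $w$ was an arbitrary word of length $t$, $\max_{w\in\setmat^t}\|A_w\|\le(\beta/\alpha)^{1/d}\lambda^{t/d}$, and \eqref{eq-def.jsr} then yields $\rho(\setmat)\le\lambda^{1/d}<1$.

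I expect the reduction step to be the crux. One must produce, for the newly created nodes, Lyapunov functions that are simultaneously positive definite and tied to the original ones by \emph{strict} single-letter inequalities; the delicate point is that a strict inequality $V_j(A_wx)<V_i(x)$ collapses to an equality on $\ker A_w$ as soon as the $A_i$ are allowed to be singular, which is precisely why the construction must carry the regularizers $\varepsilon_r\|z\|^d$ with carefully ordered sizes. (A reduction-free variant — use König's lemma to extract an infinite path of $G$ whose label eventually spells the given switching signal, then push $V$ along it — runs into the same singular-matrix obstruction via a residual bounded prefix, so routing through single-letter labels seems cleanest.)
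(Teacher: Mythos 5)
Your argument is essentially correct, and it is worth noting that this note itself does not prove Theorem~\ref{thm-path.complete.implies.stability} --- it is imported from \cite{JSR_path.complete_journal}, whose proof (as echoed here by the bound (\ref{eq:bound.on.spectral.norm}), with the constants $\xi$ and $d_m$) likewise works by chaining the edge inequalities along the paths supplied by path-completeness to get a uniform decay estimate on all products. So your overall route is the same in spirit: normalize each edge to a quantitative contraction $V_j(A_wx)\leq \lambda V_i(x)$ with $\lambda<1$, read the mirror of an arbitrary word along a path, chain, and compare with $\alpha\|x\|^d\leq V_i(x)\leq\beta\|x\|^d$ to conclude $\max_{w\in\setmat^t}\|A_w\|\leq(\beta/\alpha)^{1/d}\lambda^{t/d}$ and hence $\rho(\setmat)\leq\lambda^{1/d}<1$. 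The genuinely distinctive ingredient is your treatment of multi-letter labels: subdividing each edge $i\xrightarrow{a_1\cdots a_p}j$ and equipping the fresh nodes with the regularized ``lookahead'' functions $\theta_r V_j(A_{a_p\cdots a_{r+1}}z)+\varepsilon_r\|z\|^d$. This is a clean, self-contained way around the prefix/suffix problem (one cannot extract $\|A_w\|$ from $\|BA_wC\|$ when the matrices may be singular), and you correctly identify that the $\varepsilon$-regularizers are exactly what saves strictness on kernels. The cited paper uses a related edge-splitting device, so your proof buys self-containedness and an explicit geometric decay rate rather than a new method.

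Two details need repair, though neither is fatal. First, the ordering of the regularizers is stated backwards. The constraints your construction actually needs are: for the first edge, $\varepsilon_1\|A_{a_1}\|^d<(1-\theta_1\lambda)\alpha$; for each middle edge $m_r\rightarrow m_{r+1}$, $\varepsilon_{r+1}\|A_{a_{r+1}}\|^d<\varepsilon_r$ (the binding case being $z$ with $A_{a_p}\cdots A_{a_{r+1}}z=0$ but $A_{a_{r+1}}z\neq 0$, where the $\theta$-slack vanishes); the last edge is automatic since $\theta_{p-1}>1$. Hence the $\varepsilon_r$ must be chosen recursively smaller as $r$ increases from $1$ to $p-1$, not ``decreasing as $r$ runs from $p-1$ down to $1$'': with, say, $p=3$, $A_{a_3}=0$ and $A_{a_2}=cI$ for large $c$, the edge $m_1\rightarrow m_2$ forces $\varepsilon_2 c^d<\varepsilon_1$, which your stated ordering violates. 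Since you hedge with ``an appropriate choice of the parameters,'' this is a slip rather than a gap, but the direction should be fixed. Second, the common-degree normalization is not really a matter of connected components: the right observation is that an edge between nodes whose functions have different homogeneity degrees forces the labeling product $A_w$ to be zero (let $t\to 0$ or $t\to\infty$ in the scaled inequality), so that edge is vacuously satisfied by any positive definite replacements; one can then simply pass to $V_i^{1/d_i}$ globally (strict inequalities between equal-degree nodes survive taking roots), obtaining a common degree, after which your compactness argument for $\alpha,\beta,\lambda$ goes through unchanged.
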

}

%Consider a set of Lyapunov inequalities with $m$ different labels, and its corresponding graph $G.$
%If $G$ is path-complete, then, the Lyapunov inequalities are a valid criterion for stability, i.e., for any finite set of matrices
%$\setmat=\{A_1,\ldots,A_m\}$ which satisfies these inequalities, the corresponding \mrr{switched system} (\ref{eq-switching}) is stable.

\begin{example}
The graph \aaa{depicted} in Figure \ref{fig-hscc} \rmjj{(a graph with $m=2$ labels)} is path-complete: one can check that every word can be read \aaa{as a path} on this graph.  As a consequence, the \aaa{following} set of \aaa{LMIs} is a valid sufficient condition for stability:
\end{example}
\begin{figure}[ht]
\centering \scalebox{.3} {\includegraphics{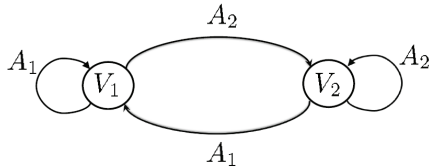}}
\caption{A graph corresponding to the LMIs \aaa{in (\ref{eq-hscc})}.  The graph is path-complete, and as a consequence any \mrr{switched system} for which these LMIs have a solution is stable.}
\label{fig-hscc}
\end{figure}

\begin{equation}\label{eq-hscc}
\begin{array}{rll}
A_1^{Tr}P_1A_1&\prec&P_1 \\
A_1^{Tr}P_1A_1&\prec&P_2 \\
A_2^{Tr}P_2A_2&\prec&P_1 \\
A_2^{Tr}P_2A_2&\prec&P_2 \\
P_1,P_2&\succ&0.
\end{array}
\end{equation}

\aaa{Unlike this simple example, the structure of path-complete graphs can be quite complicated, leading to rather nontrivial sets of LMIs that imply stability; see the examples in~\cite{JSR_path.complete_journal} such as Proposition 3.6.}

In this note, we \aaa{mainly} investigate the converse \aaa{of} Theorem \ref{thm-path.complete.implies.stability} and answer the question ``Are there other sets of \aaa{Lyapunov inequalities} which do not correspond to path-complete graphs but are sufficient conditions for stability?'' \mrr{The answer is negative.} Thus, \aaa{path-completeness fully characterizes the set of all stability-proving Lyapunov inequalities}.
%we \mrr{completely} characterize the sets of LMIs that are a sufficient condition for stability.
\aaa{By} Remark \ref{remark-approx}, \mrr{this also leads to a complete characterization of all valid inequalities for \aaa{approximation} of the joint spectral radius. } 
%Another motivation for studying LMI criteria for stability is that it appeared recently that much more quantities that are relevant for the asymptotic behaviour of \mrr{switched systems} can also be approximated thanks to LMIs.  This is the case for instance of the Lyapunov exponent (\cite{protasov-jungers-lyap}), the p-radius (\cite{jungers-protasov-pradius}),...

To this purpose, \mrr{in the next \aaa{section} we} show that for any non-path-complete graph, there exists a set of matrices which is not stable, but yet \aaa{makes the corresponding Lyapunov inequalities feasible.} This is not \mrr{a trivial} task a priori, because we need to \aaa{construct} a counterexample without knowing the graph \mrr{explicitly}, but just with the information that it is not path-complete.  \aaa{With this information alone we need to provide two things: (i) a set of matrices that are unstable, and (ii) a set of Lyapunov functions $V_i$ such that the Lyapunov inequalities associated to the edges (with the matrices found in (i)) are feasible.}

%Moreover, \aaa{we} not only have to provide the unstable set of matrices, but also \aaa{a candidate set of Lyapunov functions $V_i$, which make the associated Lyapunov inequalities feasible.}}

% the solution $\{P_i\}$ to the associated Lyapunov inequalities (hereby showing that the constructed matrices, despite being unstable, indeed satisfy the Lyapunov inequalities).} 

%We split the proof in two steps: we first study a particular case of non-path-complete graphs: For these graphs there are only two different characters (i.e. two different matrices in the set); there is only one node; and $2^l-1$ self loops, each one with a different word of length $l$ ($l$ is \rmj{an arbitrary positive integer number}).  The proof is simpler for this particular case, and we feel it gives a fair intuition on the reasoning.

\section{The main result \aaa{(necessity of path-completeness)}}\label{section-main}

\subsection{The construction}\label{section-construction}

\aaa{As stated just above,} we want to prove that if a graph is not path-complete, it \aaa{does not provide} a valid criterion for stability, meaning that there must exist \mrr{an unstable} set of matrices that satisfies the corresponding Lyapunov inequalities.  Our goal in this subsection is to describe a simple construction that will allow us to build such a \mrr{set.}
 %deleted by MR {of matrices in our main theorem.}
If a graph is not path-complete, there is a certain word $w$ which cannot be \aaa{``read''} on the graph.
%deleted by MR Let us fix $n=|w|+1,$ i.e., $n$ is the length of this word plus one ($n$ will be the dimension of our matrices). 
We propose a simple construction of a set of matrices with the following property: any long product of these matrices which is not equal to the zero matrix must contain the product $A_w.$  
\begin{defi}\label{defi-sigmaomega}
\mrr {Let $w\in \{1,2,\dots, r\}^*$ be a word on an alphabet of $r$ characters and let $n=|w|+1$.} We \mrr{denote by} $\setmat_w$ the set \mrr {of $n\times n$} $\{0,1\}$-matrices\footnote{By `$\{0,1\}$-matrix' we mean a matrix \rmjj{with all entries equal to zero or one}, and $|w|$ denotes the length of the word $w.$} $\{A_1,A_2,\dots, A_r\}$ such that the $(i,j)$ entry of $A_{l}$ is equal to one if and only if
\begin{itemize}
\item \rmjj{$j=i+1, \mbox{ and } w_i=l,$ for $1\leq i\leq n-1,$ }

 { or }
\item $(i,j)=(n,1)$ and $l=1.$
\end{itemize}\end{defi}

\begin{figure}
\centering \scalebox{0.1} {\includegraphics{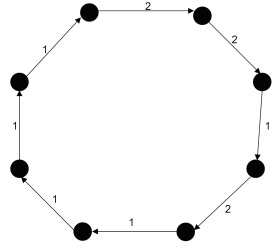}}
\caption{Graphical representation of the construction of the set of matrices $\setmat_w$ for $w={2212111}:$ the edges with label $1$ represent the matrix $A_1$ (i.e. $A_1$ is the adjacency matrix of the subgraph with edges labeled with a ``$1$''), and the edges with label $2$ represent the matrix $A_2.$}
\label{fig-sigmaomega}
\end{figure}
\mrr{In other words}, $\setmat_w$ is the only set of binary matrices whose sum is the adjacency matrix of the cycle on $n$ nodes, and such that for all $i\in\{1,\dots,n-1\},$ the $i$th edge of this cycle is in the graph corresponding to $A_{w_i},$ the last edge being in the graph corresponding to $A_1.$  Figure \ref{fig-sigmaomega} provides a visual representation of the set $\setmat_w.$

\mrr{The following lemma characterizes the main property of our construction $A_w$ in a straightforward fashion.}

\begin{lem}\label{lem-subproduct}
Any nonzero product in $\setmat_w^{2n}$ contains $A_w$ as a subproduct.
\end{lem}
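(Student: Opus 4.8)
The plan is to exploit the combinatorial meaning of the matrices $A_1,\dots,A_r$: their sum is the adjacency matrix of the directed $n$-cycle $1\to 2\to\cdots\to n\to 1$, and each edge carries a unique ``color'' --- the edge $i\to i+1$ belongs to $A_{w_i}$ for $1\le i\le n-1$, and the edge $n\to 1$ belongs to $A_1$. The crucial feature is that this labeled digraph is \emph{deterministic}: every node has exactly one outgoing edge, and that edge has one well-defined label. Hence, for any word $\sigma=\sigma_1\cdots\sigma_{2n}\in\setmat_w^{2n}$ and any starting node $k_0$, there is a unique walk of length $2n$ out of $k_0$, and its sequence of edge-colors is forced: it is the periodic string $w_1\,w_2\,\cdots\,w_{n-1}\,1\,w_1\,w_2\,\cdots$ read starting at a suitable offset.

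First I would make the entrywise statement precise: $(A_{\sigma_1}\cdots A_{\sigma_{2n}})_{ij}$ counts walks $i=k_0\to k_1\to\cdots\to k_{2n}=j$ in which the $t$-th step uses the edge colored $\sigma_t$. Since the graph is a cycle, there is at most one such walk from a given $k_0$ (no choice is ever available), so every entry of the product is $0$ or $1$, and the product is nonzero if and only if for some $k_0$ the forced length-$2n$ walk starting at $k_0$ has color sequence exactly $\sigma_1,\dots,\sigma_{2n}$. Thus, once we know the product is nonzero, $\sigma_1\cdots\sigma_{2n}$ \emph{is} the color sequence of an actual walk of length $2n$ in the cycle.

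Next I would observe that any such color sequence contains $w_1\,w_2\,\cdots\,w_{n-1}$ as a block of $n-1$ consecutive letters. Writing $e_i$ for the edge $i\to i+1$ ($i<n$) and $e_n$ for $n\to 1$, the walk's edge sequence is the periodic sequence $e_1,e_2,\dots,e_n,e_1,e_2,\dots$ started at some offset. The first time $e_1$ is traversed occurs at some step $p$ with $1\le p\le n$; then steps $p,p+1,\dots,p+n-2$ traverse exactly $e_1,e_2,\dots,e_{n-1}$ (these are $n-1$ distinct edges lying within a single period, so no wrap-around to $e_n$ occurs), and since $p+n-2\le 2n-2<2n$ all of these steps lie inside our length-$2n$ walk. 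The colors of $e_1,\dots,e_{n-1}$ are $w_1,\dots,w_{n-1}$ by construction, so $\sigma_p=w_1,\ \sigma_{p+1}=w_2,\ \dots,\ \sigma_{p+n-2}=w_{n-1}$, and therefore $A_{\sigma_p}A_{\sigma_{p+1}}\cdots A_{\sigma_{p+n-2}}=A_{w_1}A_{w_2}\cdots A_{w_{n-1}}=A_w$, which is a contiguous subproduct of $A_{\sigma_1}\cdots A_{\sigma_{2n}}$.

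I expect the only delicate point to be bookkeeping: lining up the order of the matrix product with the time-order of the walk (and keeping this distinct from the ``mirror'' convention used for the Lyapunov-inequality graph), and checking that the forced run $e_1,\dots,e_{n-1}$ really fits inside a walk of length $2n$ no matter where the walk begins. Both are elementary; once the deterministic-cycle picture is in place, the lemma reduces to a short observation about windows in a periodic string, and the length $2n$ is in fact slightly more than needed.
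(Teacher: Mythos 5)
Your proof is correct and follows essentially the same route as the paper's: identify a nonzero product with the unique (forced) walk in the labeled $n$-cycle, note that a walk of length $2n$ must pass through node $1$ within its first $n$ steps, and read off $w_1\cdots w_{n-1}$ from the next $n-1$ edges. You simply spell out in more detail the determinism of the walk and the index bookkeeping that the paper's terser argument leaves implicit.
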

\begin{proof}
\rmjj{Recall that the matrices in $\setmat_w$ are adjacency matrices of a subgraph of the cycle on $n$ nodes.
Hence, a nonzero product corresponds to a path in this graph.  A path of length more than $n$ must contain a cycle. Since there is only one cycle in this graph, this cycle is the whole graph itself.}  Finally, a path of length $2n$ must contain a cycle starting at node $1$ (i.e., the node corresponding to the beginning of the word $w$).  Hence, this product contains $A_w.$\end{proof}

\subsection{The proof}

Let us consider an arbitrary non-path-complete graph.  From this graph, we will first construct a set of matrices which is not stable.  Then we will prove that the corresponding \aaa{inequalities}, as given by our automatic recipe that \aaa{associates graphs with Lyapunov inequalities via Figure \ref{fig-node.arc}}, admit a solution. \aaa{For this purpose, we will take our Lyapunov functions to be quadratic functions (though a similar construction is possible for other classes of Lyapunov functions).}
%We will start by studying another family of Lyapunov functions.  These functions are only defined on the positive orthant but, as we will see, it is sufficient for nonnegative matrices.  We note these functions $V_p,$ where $p$ is a positive vector which defines them entirely: \begin{defi} \rmj{For any given positive vector $p\in \re^{n}_+,$ we define the corresponding Lyapunov function}
Thus, we will have to find a solution (that is, a set of \aaa{positive definite} matrices $P_i$) to particular sets of LMIs.  It turns out that for our purposes, we can restrict our attention to \rmjj{diagonal matrices} $P=\mbox{diag}(p),$ where \aaa{$p\in \re^n$} is a positive vector. For these matrices, the corresponding Lyapunov functions satisfy\footnote{\rmjj{To avoid conflict with other notation, we write $p(l)$ for the $l$th entry of vector $p.$}}:\begin{equation}\label{eq-diag-quads}V_{p}(x)=\sum_{i=1}^np(i)x_{i}^2. \end{equation}
%\begin{equation}\label{eq-linear-norms}V_p(x)=\inf{\{\lambda: x /\lambda \leq p\}},\end{equation} where the inequality is entrywise.  This quantity is a valid norm for nonnegative vectors, and geometrically, its unit ball is simply the set $\{x=p-y:y\geq 0\}$. We call this family of Lyapunov functions \emph{\conicnorm{} Lyapunov functions.}\end{defi}  

The following proposition provides an easy way to express Lyapunov inequalities \aaa{among diagonal quadratic forms}. It will allow us to write the Lyapunov inequalities in terms of entrywise vector inequalities.  
%this family of PSD matrices, when dealing with  matrices of the \mrr{form} that we derived in the previous section. 

\begin{prop}\label{lem-linear-norms}
\rmjj{Let $p,p'\in \re^n_{++}$ (i.e. $p,p'$ are positive vectors), and $A\in \{0,1\}^{n\times n}$ be matrices with not more than one nonzero entry in every row and every column.  Then, we have
\begin{equation}\label{eq-conicnorm-lp} \forall x\in \re^n, \, V_{p'}(A^{Tr}x)< V_{p}(x)\quad \iff \quad Ap' < p ,\end{equation} where the vector inequalities are to be understood componentwise, and the Lyapunov functions $V_p$ \aaa{and $V_{p'}$} are as in Equation \eqref{eq-diag-quads}.} 
\end{prop}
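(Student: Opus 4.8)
The plan is to prove the biconditional in~\eqref{eq-conicnorm-lp} by computing both sides explicitly, exploiting the extremely rigid structure of $A$ (at most one nonzero entry per row and column, with entries in $\{0,1\}$). First I would understand the action of $A^{Tr}$ on a vector: if $A$ has its nonzero entry in row $i$ at column $\pi(i)$ for $i$ in some subset $S\subseteq\{1,\dots,n\}$ (and row $i$ zero otherwise), then $(A^{Tr}x)_j = x_i$ whenever $j=\pi(i)$ for some $i\in S$, and $(A^{Tr}x)_j=0$ otherwise. Because the nonzero entries also occupy distinct columns, $\pi$ is an injection on $S$, so the map $x\mapsto A^{Tr}x$ simply relocates (a subset of) the coordinates of $x$ to distinct new positions and zeroes out the rest. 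Plugging this into $V_{p'}(A^{Tr}x)=\sum_j p'(j)(A^{Tr}x)_j^2$ collapses the sum to $\sum_{i\in S} p'(\pi(i))\, x_i^2$, i.e. $\sum_i (Ap')_i x_i^2$, since $(Ap')_i = p'(\pi(i))$ for $i\in S$ and $(Ap')_i=0$ for $i\notin S$.

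With this in hand, the inequality $\forall x\in\re^n,\ V_{p'}(A^{Tr}x)<V_p(x)$ becomes $\sum_{i=1}^n (Ap')_i x_i^2 < \sum_{i=1}^n p(i) x_i^2$ for all $x$. The backward direction is then immediate: if $Ap'<p$ componentwise, the inequality holds for every $x\neq 0$ (and one should note the statement as written quantifies over all $x\in\re^n$, so strictly at $x=0$ both sides are $0$; I would either read the inequality in the punctured-space sense consistent with~\eqref{eq:LI} and Figure~\ref{fig-node.arc}, or simply remark that the substantive content is the strict inequality off the origin). For the forward direction, I would test the inequality at $x=e_i$, the $i$th standard basis vector, which yields $(Ap')_i < p(i)$ for each $i$; ranging over all $i$ gives $Ap'<p$.

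The main obstacle—really the only subtlety—is bookkeeping the index correspondence correctly: one must be careful that $A$ acts on the \emph{left} in the defining Lyapunov inequality via $A^{Tr}$, so it is $A$ (not $A^{Tr}$) that multiplies $p'$ in the final condition $Ap'<p$, and one must verify that the ``at most one nonzero entry per row and column'' hypothesis is exactly what guarantees (a) no cross terms $x_ix_j$ with $i\neq j$ appear after the substitution, and (b) the coefficient of $x_i^2$ in $V_{p'}(A^{Tr}x)$ is precisely the $i$th entry of $Ap'$. Both (a) and (b) follow from the injectivity of the partial permutation $\pi$ described above, so once that structural observation is stated cleanly the rest is a one-line computation. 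I would also note for later use (this is the point of the proposition in the paper's argument) that the matrices $A_l\in\setmat_w$ of Definition~\ref{defi-sigmaomega}, and all their products, are exactly $\{0,1\}$-matrices of this restricted type, since each $A_l$ is a sub-permutation of the $n$-cycle's adjacency matrix, and sub-permutation matrices are closed under products.
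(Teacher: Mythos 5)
Your proposal is correct and follows essentially the same route as the paper: the forward direction tests the inequality on standard basis vectors, and the backward direction amounts to observing that the row/column sparsity of $A$ makes $AP'A^{Tr}$ equal to $\mathrm{diag}(Ap')$, which the paper writes as a congruence of rank-one sums and you write as a pointwise evaluation of the quadratic form. Your side remark that the quantifier should exclude $x=0$ (consistent with \eqref{eq:LI}) is a fair observation about the statement, not a gap in the argument.
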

\begin{proof}
$\Rightarrow:$ For an arbitrary index $1\leq l'\leq n,$ consider the $l'th$ row in the right-hand side inequality.  If the $l'th$ row of $A$ is equal to the zero vector, then the inequality is obvious.  If not, take the index $1\leq l\leq n$ such that $A_{l',l}=1,$ and fix $x=e_{l'},$ \rmjj{that is, the $l'$th canonical basis vector.}  Then the left-hand side of (\ref{eq-conicnorm-lp}) becomes \rmjj{$$p'(l)< p(l'), $$} which is exactly the $l'th$ row of the entrywise inequality we wish to prove.
% and taking into account that $V_{p'}(p')=1,$ we obtain that $V_{p}(Ap')\leq 1,$ and then $$Ap'\leq p.$$

$\Leftarrow:$
 Writing $P'=\mbox{diag}(p'),$ we have: \rmjj{ \begin{eqnarray}AP'A^{Tr}&=& A \left (\sum_l{p'(l)\vectun_l\vectun_l^{Tr}}\right ) A^{Tr}\\ \nonumber &= &   \sum_l{p'(l) (A\vectun_l)( A \vectun_l)^{Tr}}\\ \nonumber&\prec&  \sum_{l'}{p(l')} \vectun_{l'}\vectun_{l'}^{Tr}\\ \nonumber&=&P, \end{eqnarray}
which is equivalent to the left-hand side of (\ref{eq-conicnorm-lp}).}
% First, remark that for any pair of nonnegative vectors $y,z\in\re_+^n$ and any nonnegative matrix $A\in \re^{n\times n}_+,$ $y\leq z$ implies $Ay\leq Az.$  Now, take any vector $x\in\re^n_{+},$ and denote $\gamma=V_{p'}(x).$  We have \begin{eqnarray} x/\gamma &\leq & p'\\ Ax/\gamma &\leq &Ap'\\&\leq &p. \end{eqnarray} Thus, $$V_p(Ax)=\inf\{\lambda: Ax/\lambda \leq p\}\leq \gamma. $$
\end{proof}

In the previous subsection, we have shown how to build, for a particular non path-complete graph, a set of matrices, which is clearly not stable.  In the above proposition, we have shown that, for such matrices (with not more than one one-entry in every row and every column), the Lyapunov inequalities translate into simple constraints (namely, entrywise vector inequalities), provided that the quadratic Lyapunov functions $P_i$ have the simple \mrr{form} of Equation \eqref{eq-diag-quads}.  Now, we put all these pieces together: we show how to construct such Lyapunov functions, and use our characterization \eqref{eq-conicnorm-lp} in order to prove that these functions satisfy the Lyapunov inequalities.
\begin{theom}\label{theo-stab-implies-pc} A set of quadratic Lyapunov inequalities is a sufficient condition for stability if and only if the corresponding graph is path-complete.
\end{theom}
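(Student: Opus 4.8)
To prove Theorem~\ref{theo-stab-implies-pc}, I would establish the two implications separately. The ``if'' direction is exactly Theorem~\ref{thm-path.complete.implies.stability}: if $G$ is path-complete and the quadratic Lyapunov inequalities it encodes have a solution, the switched system is stable. The content is therefore the ``only if'' direction, which I would prove in contrapositive form: \emph{if $G$ is not path-complete, there exists an unstable set of matrices for which the quadratic Lyapunov inequalities attached to $G$ are feasible}, so that feasibility of those inequalities cannot imply stability. Since $G$ is not path-complete, fix once and for all a word $u$ over the alphabet $\setmat=\{1,\dots,m\}$ that cannot be read on $G$, set $w:=\bar u$ and $n:=|w|+1=|u|+1$, and take the matrix family to be $\setmat_w=\{A_1,\dots,A_m\}$ of Definition~\ref{defi-sigmaomega}. (I would assume every edge of $G$ carries a nonempty label; a cycle of empty-label edges is a degenerate case where the inequalities are infeasible and $G$ is trivially not path-complete.)

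First I would verify that $\setmat_w$ is unstable. Every matrix in $\setmat_w$, hence every product of such matrices, is a $\{0,1\}$-matrix with at most one nonzero entry per row and per column, so $\rho(\setmat_w)\le 1$. On the other hand, going once around the cycle of Figure~\ref{fig-sigmaomega} starting at node $1$ produces the product $A_{w_1}A_{w_2}\cdots A_{w_{n-1}}A_1$, which sends $e_1$ to $e_1$ and hence has spectral radius at least $1$; so $\rho(\setmat_w)\ge 1$. Thus $\rho(\setmat_w)=1$, the system is not stable, and $\setmat_w$ is a legitimate unstable witness.

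Next I would construct the Lyapunov functions. Following the remark after Proposition~\ref{lem-linear-norms}, I restrict to diagonal quadratic forms $V_{p_i}(x)=\sum_c p_i(c)x_c^2$ with $p_i\in\re^n_{++}$, one per node $i$ of $G$. For an edge $i\to j$ labeled by the word $v$, the associated Lyapunov inequality is $V_j(A_{\bar v}x)<V_i(x)$; since $A_{\bar v}$, being a product of matrices from $\setmat_w$, has at most one nonzero entry per row and column, Proposition~\ref{lem-linear-norms} converts this into the entrywise inequality $A_{\bar v}^{Tr}p_j<p_i$. Reading off coordinates and identifying the nodes of the $n$-cycle with $\{1,\dots,n\}$: one has $A_{\bar v}e_c=e_b$ with $b\equiv c-|v|\pmod n$ exactly when the length-$|v|$ walk of the cycle ending at node $c$ is labeled $\bar v$ (and $A_{\bar v}e_c=0$ otherwise), so the $c$-th coordinate of the inequality is $p_j(b)<p_i(c)$ in that case and the harmless $0<p_i(c)$ otherwise. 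I would collect all these strict constraints into a finite directed graph $H$ on vertex set $\{\text{nodes of }G\}\times\{1,\dots,n\}$, placing an arc $(i,c)\to(j,b)$ for each constraint $p_j(b)<p_i(c)$. A positive solution $\{p_i\}$ exists if and only if $H$ has no directed cycle: when $H$ is acyclic, setting $p_i(c)$ to be one plus the length of a longest $H$-path leaving $(i,c)$ gives a positive value that strictly decreases along every arc, hence satisfies all the inequalities.

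The remaining step --- showing $H$ is acyclic --- is the one I expect to be the main obstacle, since it requires carefully tracking mirrors and the orientation of the cycle of Figure~\ref{fig-sigmaomega}. Suppose $H$ contained a directed cycle; I would traverse it \emph{twice}. Composing the underlying $G$-edges yields a genuine closed path in $G$ whose concatenated label $Q$ has length equal to a positive multiple of $n$; unwinding the definition of the arcs of $H$, the word $\bar Q$ labels a closed walk in the $n$-cycle that winds around it at least twice. A closed walk winding around the cycle at least twice reads, as a contiguous subword, the full cycle label $w_1w_2\cdots w_{n-1}\,1$ (winding only once from an unfavorable start node need not suffice, which is precisely why one traverses the $H$-cycle twice); equivalently this is the combinatorial content of Lemma~\ref{lem-subproduct}. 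In particular $\bar Q$ contains $w$ as a subword, so $Q=\overline{\bar Q}$ contains $\bar w=u$ as a subword. But $Q$ is the concatenated label of an actual path in $G$, so $u$ would be readable on $G$, contradicting the choice of $u$. Hence $H$ is acyclic, the vectors $p_i\in\re^n_{++}$ exist, and the quadratic Lyapunov inequalities encoded by the non-path-complete graph $G$ are satisfied by the unstable family $\setmat_w$. This establishes the ``only if'' direction and completes the proof.
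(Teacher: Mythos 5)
Your proof is correct and follows essentially the same route as the paper: the counterexample family $\setmat_w$ of Definition~\ref{defi-sigmaomega}, diagonal quadratic Lyapunov functions reduced to entrywise vector inequalities via Proposition~\ref{lem-linear-norms}, and an auxiliary constraint graph on (node, coordinate) pairs proved acyclic using the unreadable word together with the content of Lemma~\ref{lem-subproduct}, then ordered to produce the vectors $p_i$. The only differences are bookkeeping: you absorb the mirror convention by taking $w=\bar u$ and the untransposed family $\setmat_w$, whereas the paper keeps the unreadable word itself and transposes the matrices ($\setmat_w^{Tr}$), and you use longest-path values in place of the paper's topological numbering---both equivalent.
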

%The proof proceeds by explicitly constructing a counterexample for any given non-path-complete graph, that is, a set of matrices for which the corresponding inequalities have a solution, but which is not stable.  In the proof, we make use of the construction presented in Section \ref{section-construction}, and we then explicitely build the Lyapunov functions which are solutions of the corresponding Lyapunov inequalities. Proposition \ref{lem-linear-norms} allows us to do so by replacing the complicated quantified inequalities in the left-hand side of (\ref{eq-conicnorm-lp}) by the corresponding simple entrywise inequalities at the right-hand side. 
\begin{proof}
The if part is exactly Theorem \ref{thm-path.complete.implies.stability}. We now prove the converse: for any non path-complete graph, we constructively provide a set of matrices that satisfies the corresponding Lyapunov inequalities (with Lyapunov functions of the form \eqref{eq-diag-quads}), but which is not stable.  \rmjj{Our proof works in three steps: first, for a given graph which is not path-complete, we show how to build a particular unstable set of matrices. Then, we compute a set of solutions $p_i$ for our Lyapunov inequalities, and finally, we prove that these $p_i$ are indeed valid solutions, for the particular matrices we have built.}

{\bf 1. The counterexample}\\
For a given graph $G$ which is not path-complete, there is a word $w$ that cannot be read as a subword of a sequence of labels on a path in
this graph.  We use the construction above with the particular word $ w.$  We show below that the set of Lyapunov inequalities corresponding to $G$ admits
a solution for the set of matrices (see Definition \ref{defi-sigmaomega}) $$\setmat_{  w}^{Tr}=\{A^{Tr}:A\in \setmat_w\}.$$ Actually, we show that for these matrices, there is in fact a solution within the restricted family of diagonal quadratic Lyapunov functions defined in \eqref{eq-diag-quads}.  Since $\setmat_{  w}^{Tr}$ is not stable by construction, this will conclude the proof.

{\bf 2. Explicit solution of the Lyapunov inequalities}\\
We have to construct a vector $p_i$ defining a norm for each node of the graph $G.$ In order to do this, we construct an \emph{auxiliary graph} $G'(V',E')$ from the graph $G.$  The nodes of $G'$ are the couples ($N$ is the number of nodes in $G$ and $n$ is the dimension of the matrices in $\setmat_{  w}^{Tr}$): $$V'=\{(i,l): 1\leq i\leq N, 1\leq l\leq n\} $$ (that is, each node represents a particular entry of a particular Lyapunov function $p_i$).
There is an edge in $E'$ from $(i,l)$ to $(j,l')$ if and only if \begin{enumerate}  \item \label{item-label}there is an edge from $i$ to $j$ in $G$ with label $A_k$ (\rmjj{where $A_k$ can represent a single matrix, or a product of matrices}),\item the corresponding matrix $A_k$ ($A_k$ is a matrix in $ \setmat_w,$ or possibly a product of such matrices) is such that
\begin{equation}\label{eq-auxiliary-graph} (A_k)_{l,l'}=1.\end{equation} \end{enumerate} We give the label $A_k$ to this edge in $G'.$

We \emph{claim} that $G'$ is acyclic. Indeed, by (\ref{eq-auxiliary-graph}), a cycle $(i,l)\rightarrow \dots \rightarrow (i,l)$ in $G'$ describes a product of matrices in $\setmat_{  w}$ such that $A_{l,l}=1.$  (We can build this product by following the labels of the cycle.) Now, take a nonzero product of length $2n$ by following this cycle (several times, if needed).  By Lemma \ref{lem-subproduct}, any long enough nonzero product of matrices in $\setmat_{  w}$ contains the product $A_{  w}; $ and thus there is a path with label $w$ in $G'.$\\
Now, by item \ref{item-label}. in our construction of $G',$ any such path in $G'$ corresponds to a path in $G$ with the same sequence of labels $w,$ a contradiction; and \emph{this proves the claim.} 

Let us construct $G'=(V',E')$ as above.  It is well known that the nodes of an acyclic graph admit a renumbering $$s: \, V\rightarrow \{1,\dots,|V|\}:\quad v\rightarrow s(v) $$ such that there can be a path from $v$ to $v'$ only if $s(v)>s(v')$ (see \cite{kahn62}).  This numbering finally allows us to define our nonnegative vectors $p_i$ in the following way:
$$ p_i({l}) := s((i,l)).$$

{\bf 3. Proof that the solution $\{p_i\}$ is valid}\\
We have to show that for every edge $i\rightarrow j$ of $G=(V,E)$ with label $A_k,$ the following holds  $$\forall x, V_{p_j}(A_k^{Tr}x)< V_{p_i}(x).$$
By Proposition \ref{lem-linear-norms} above, we can instead show that \begin{equation}\label{eq-entrywise} A_kp_j< p_i. \end{equation} 
 
If $(A_kp_j)_{l}= 0,$ then \eqref{eq-entrywise} obviously holds at its $l$th component.  If $(A_kp_j)_{l}\neq 0,$ we have a particular index $l'$ such that $(A_k)_{l,l'}=1,$ and $$(A_kp_j)_{l}=(p_j)_{l'}.$$  Now, it turns out that indeed $$(p_j)_{l'}< (p_i)_{l.}$$ This is because $(A_k)_{l,l'}=1,$ together with $(i,j)\in E$ implies (by our construction) that there is an edge $((i,l)\rightarrow (j,l'))\in E'.$ Thus, $(p_j)_{l'}< (p_i)_{l},$ which gives the required Inequality \eqref{eq-entrywise}, and the proof is complete.
\end{proof}

\begin{example}
The graph represented in Figure \ref{fig-hscc-wrong} is \emph{not path-complete}: \rmjj{one can easily check for instance that the word $A_1A_2A_1$} cannot be read as a subword of a path in the graph.  As a consequence, the set of \aaa{LMIs in} (\ref{eq-hscc-wrong}) is not a valid condition for stability, even though it is very much similar to (\ref{eq-hscc}).

\begin{equation}\label{eq-hscc-wrong}
\begin{array}{rll}
A_1^{Tr}P_1A_1&\prec&P_1 \\
A_2^{Tr}P_1A_2&\prec&P_2 \\
A_2^{Tr}P_2A_2&\prec&P_1 \\
A_2^{Tr}P_2A_2&\prec&P_2 \\
P_1,P_2&\succ&0.
\end{array}
\end{equation}
As an example, one can check that the set of matrices  $$\setmat =\left \{  \begin{pmatrix}
    -0.7   &  0.3  &   0.4\\
     0.4   &  0 &    0.8\\
    -0.7   &  0.5 &    0.7
\end{pmatrix},  
\begin{pmatrix} -0.3   & -0.95   &      0\\
    0.4   & 0.5 &   0.8\\
   -0.6       &  0   & 0.2
\end{pmatrix}\right \} $$  makes (\ref{eq-hscc-wrong}) feasible, even though this set is unstable.  Indeed, $$\rho(\Sigma)\geq \rho(A_1A_2A_1)^{1/3}=1.01\dots$$\\ \rmj{
It is not surprising that this unstable product precisely corresponds to a ``missing'' word in the language generated by the automaton in Figure \ref{fig-hscc-wrong}.}
\end{example}
\begin{figure}[ht]
\centering \scalebox{.3} {\includegraphics{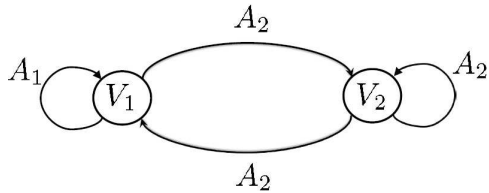}}
\caption{\aaa{The} \rmjj{graph corresponding to the LMIs \aaa{in} (\ref{eq-hscc-wrong}).  The graph is not path-complete: one can easily check for instance that the word $A_1A_2A_1$} cannot be read as a path in the graph.}
\label{fig-hscc-wrong}
\end{figure}

\subsection{PSPACE-completeness of the recognizability problem}
Our results imply that it is PSPACE-complete to recognize sets of LMIs that are valid stability criteria. \rmjj{PSPACE-complete problems are a well-known class of problems, harder than NP-complete, see \cite{GJ-computers-igt} for details.}
\begin{theom}\label{thm-pspace}
Given a set of quadratic Lyapunov inequalities, it is \aaa{PSPACE-complete} to decide whether they constitute a valid stability criterion.
\end{theom}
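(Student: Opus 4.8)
The plan is to establish PSPACE-completeness by proving membership in PSPACE and PSPACE-hardness separately, with the bridge to automata theory supplied by Theorem~\ref{theo-stab-implies-pc}, which tells us that deciding whether a set of quadratic Lyapunov inequalities is a valid stability criterion is \emph{exactly} the same problem as deciding whether the associated labeled graph $G$ (built via Figure~\ref{fig-node.arc}) is path-complete. So the entire theorem reduces to showing that path-completeness of a labeled directed graph is PSPACE-complete.

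For membership in PSPACE, I would reformulate path-completeness as a language-theoretic question. Given the graph $G$ with edge labels from $\setmat^*$, consider $G$ as a nondeterministic finite automaton $\cA$ in which every state is both initial and accepting, and each edge labeled by a word $v\in\setmat^*$ is expanded into a short path of ordinary single-letter transitions (introducing $|v|-1$ fresh intermediate states per edge). Then $G$ is path-complete if and only if every word $u\in\setmat^*$ occurs as a factor (contiguous subword) of some word accepted by $\cA$; equivalently, the language $\setmat^*\cdot L(\cA)\cdot\setmat^*$ — which is recognized by $\cA$ after adding self-loops on all letters at the start and end, i.e. a slightly modified automaton $\cA'$ — equals $\setmat^*$. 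The complement ``$G$ is \emph{not} path-complete'' is then: $L(\cA')\neq\setmat^*$, i.e. $\cA'$ rejects some word. Universality of an NFA (whether $L(\cA')=\setmat^*$) is the canonical PSPACE-complete problem; its complement, non-universality, is in NPSPACE $=$ PSPACE by Savitch. Concretely, one guesses a rejected word letter-by-letter while maintaining, via the subset construction, the current set of reachable states of $\cA'$, which takes only polynomial space; by the pigeonhole principle on subsets it suffices to search words of length at most $2^{|V'|}$, so a nondeterministic polynomial-space machine accepts iff $G$ is not path-complete. Hence recognizing path-completeness — and therefore recognizing valid stability criteria — is in PSPACE.

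For PSPACE-hardness, I would reduce from NFA universality. Given an NFA $\cB$ over alphabet $\Delta$ with state set $Q$, initial states $I\subseteq Q$, accepting states $F\subseteq Q$, I want to build a labeled graph $G_\cB$ that is path-complete if and only if $L(\cB)=\Delta^*$. The natural idea: enlarge the alphabet to $\setmat=\Delta\cup\{\$\}$ for a fresh symbol $\$$, turn $\cB$'s transitions into edges of $G_\cB$, and add structure forcing that any word to be ``read as a factor'' must, modulo the delimiter $\$$, correspond to a word accepted by $\cB$. One clean way: add a single ``hub'' node $h$ with an edge $h\to q$ labeled $\$$ for every $q\in I$, an edge $q\to h$ labeled $\$$ for every $q\in F$, and a self-loop at $h$ labeled $\$$; then every word over $\setmat$ that is a factor of a path label must be sandwiched between $\$$'s that route through $I$ and $F$, so path-completeness of $G_\cB$ becomes equivalent to $\cB$ accepting every word of $\Delta^*$, possibly after a small amount of padding to handle words containing $\$$ internally. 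Getting this gadget exactly right — ensuring the ``contains as a subword'' quantifier in Definition~\ref{def-path-complete} lines up precisely with ``is accepted by $\cB$'' and that no spurious paths through $h$ create unwanted factors — is the part requiring care, and it is where I expect the main obstacle to lie. Once the gadget is verified, the reduction is clearly polynomial-time, and combined with the PSPACE membership above this proves that deciding whether a set of quadratic Lyapunov inequalities is a valid stability criterion is PSPACE-complete.
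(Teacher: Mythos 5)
Your proposal takes essentially the same route as the paper for the hardness direction: the paper also reduces from NFA universality (the ``full language problem''), gluing accepting states back to initial states with edges labeled by a fresh letter $f$ and then asking for path-completeness over the enlarged alphabet; your hub node $h$ is just a factored version of those direct accepting-to-initial edges, and your sandwiching argument ($fuf$ forces a run of $\cB$ on $u$ from an initial to an accepting state because the $\Delta$-labeled edges live entirely inside $Q$) is exactly the paper's. The one genuine flaw sits precisely where you flagged it: the self-loop at $h$ labeled by the fresh letter lets $ff$ --- and hence, after decomposing any word along its occurrences of the delimiter, every word --- be read even when $\cB$ rejects the empty word, so your graph can be path-complete while $L(\cB)=\Delta^*\setminus\{\epsilon\}\neq\Delta^*$. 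Dropping the self-loop repairs this (then $ff$ is readable iff some state is both initial and accepting, i.e.\ iff $\epsilon\in L(\cB)$), which is in effect what the paper's direct-edge construction does. Two further remarks. First, your PSPACE-membership argument is a welcome addition (the published proof gives only the reduction and leaves membership implicit), but your language-theoretic reformulation is inverted: $\setmat^*\cdot L(\cA)\cdot\setmat^*=\setmat^*$ says every word \emph{contains} a path label as a factor, which is trivially true since $\epsilon$ is a path label; what path-completeness requires is that every word \emph{is} a factor of a path label. The correct and simpler observation is that once every state (including the intermediate states introduced when expanding multi-letter edge labels) is made both initial and accepting, the path-label language is factor-closed, so path-completeness is literally NFA universality, whose complement is decided in NPSPACE $=$ PSPACE by the on-the-fly subset construction you describe. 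Second, note that you only need one direction of Theorem~\ref{theo-stab-implies-pc} plus Theorem~\ref{thm-path.complete.implies.stability} to identify ``valid stability criterion'' with ``path-complete,'' which is how the paper frames the reduction as well.
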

\begin{proof}  Our proof works by reduction from the \fulllanguage.  
In the \fulllanguage, one is given a finite state automaton on a certain alphabet $\Sigma,$ and it is asked whether the language that it accepts is the language $\Sigma^*$ of all the possible words. It is well known that the \fulllanguage{} is PSPACE-complete (\cite{GJ-computers-igt}).\\ A labeled graph corresponds in a straightforward way to a finite state automaton.  However, the concept of automaton is slighlty more general, in that they include in their definition a set of \emph{starting states,} and \emph{terminating states,} so that each path certifying that a particular word belongs to the language must start (resp. end) in a starting (resp. terminating) state. Thus, in order to reduce the \fulllanguage{}  to the question of recognizing whether a graph is path-complete, we must be able to transform the automaton into a new one for which all the states are starting and accepting. \rmjj{For this purpose, we will need to introduce a new fake character $f$ in the alphabet.}

So, let us be given an arbitrary automaton.  We then connect all accepting nodes to all starting nodes, with an edge labeled with \rmjj{the new character} $f.$  Now, we make all the nodes starting and accepting, and we ask whether all the words in our new alphabet $\Sigma \cup \{f\}$ are accepted in our new automaton, that is, if the obtained graph is path-complete. \\
If all words can be read on the graph, then in particular, \aaa{all words starting and ending with the character $f,$ and containing non-$f$ characters in between can be read on the graph}. Thus, our initial automaton generates all the words on the initial alphabet. \\
Conversely, if the initial automaton generates all the words on the initial alphabet, by decomposing an arbitrary word on the new alphabet \aaa{as} $w=w_1fw_2f\dots f,$ we see that we can generate all the words on the new alphabet $\Sigma \cup \{f\}.$  
In other words, the new graph is path-complete if and only if the initial automaton was accepting all the words on $\Sigma^*.$ The proof is complete.
\end{proof}

\section{Invariant sets and \aaa{path-complete graphs}}\label{sec:invariant.sets}
%In this section, we study a related question regarding certificates of stability derived from path-complete graphs. 

Recall that Theorem~\ref{thm-path.complete.implies.stability} states that Lyapunov inequalities associated with any path-complete graph imply stability of the switched system in (\ref{eq-switching}). The proof of this theorem, as it appears in~\cite{ajprhscc11},~\cite{JSR_path.complete_journal}, does not give rise to an explicit common Lyapunov function for system (\ref{eq-switching}). In other words, if the conditions of Theorem~\ref{thm-path.complete.implies.stability} are satisfied, then we know that system (\ref{eq-switching}) is \aaa{stable}, but it is not clear how to construct an invariant set for its trajectories. The question hence naturally arises (and has been repeatedly brought up to us in presentations of our previous work) as to whether one can construct a single common Lyapunov function $W$ (i.e., one that satisfies $W(A_ix)< W(x), \forall x\neq0, \forall i\in\{1,\ldots,m\}$) by combining the Lyapunov functions $V_i$ assigned to each node of the graph. This is the question that we address in this section.\aaa{\footnote{For the purposes of this section, we assume that the labels on our edges are matrix products of length one, though the extension to the general case is straightforward.}} We start with a simple proposition that establishes a ``bi-invariance'' property, showing that a path-complete graph Lyapunov function implies existence of two sets in the state space with the property that points in one never leave the other.

%We partially address this question with two proposition and a formal conjecture.

\begin{prop}\label{prop:bi.invariance}
Consider any path-complete graph $G$ with edges labeled by\\ $\{A_1,\ldots,A_m\}$ and suppose there exist Lyapunov functions $V_1,\ldots,V_k$, one per node of $G$, that satisfy the inequalities imposed by the edges. Then, for all $\alpha\geq 0,$ and for all finite products $A_{\sigma_s}\cdots A_{\sigma_1}$, if 
$\bar{x}\in\cap_{l=1,\ldots,k} \{x| \ V_l(x)\leq\alpha\},$ then
$$A_{\sigma_s}\cdots A_{\sigma_1} \bar{x}\in\cup_{l=1,\ldots,k} \{x| \ V_l(x)\leq\alpha\}.$$
\end{prop}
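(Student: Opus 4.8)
The plan is to reduce the statement to telescoping the edge inequalities of $G$ along a single, well-chosen path, with path-completeness supplying that path.

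First I would record what one edge gives. By the construction in Figure~\ref{fig-node.arc} (and the standing assumption of this section that every label is a matrix product of length one, so the mirror operation is trivial), an edge of $G$ from node $i$ to node $j$ labeled $A_l$ encodes the inequality $V_j(A_l x)<V_i(x)$ for all $x\neq 0$. Iterating this along a directed path $n_0\xrightarrow{A_{c_1}}n_1\xrightarrow{A_{c_2}}\cdots\xrightarrow{A_{c_t}}n_t$ in $G$ — substituting $x\mapsto A_{c_1}x$ into the inequality of the second edge, then $x\mapsto A_{c_2}A_{c_1}x$ into that of the third, and so on, and using $V_{n_t}(0)=0$ to absorb the case where some intermediate product annihilates $x$ — I would obtain
\begin{equation}\label{eq:chain-bi-inv}
V_{n_t}\!\left(A_{c_t}\cdots A_{c_1}\,x\right)\ \leq\ V_{n_0}(x)\qquad\text{for all }x\in\re^n .
\end{equation}

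Next I would invoke path-completeness. The case $s=0$ is trivial, since then $A_{\sigma_s}\cdots A_{\sigma_1}\bar x=\bar x$ already lies in $\cap_{l}\{x\,|\,V_l(x)\leq\alpha\}$, hence in the union. For $s\geq 1$, applying Definition~\ref{def-path-complete} to the word $\sigma_1\sigma_2\cdots\sigma_s$ produces a directed path in $G$ whose concatenated sequence of labels contains $\sigma_1\cdots\sigma_s$ as a subword; restricting to the stretch of the path realizing this occurrence and relabeling its nodes gives a sub-path $n_0\xrightarrow{A_{\sigma_1}}n_1\xrightarrow{A_{\sigma_2}}\cdots\xrightarrow{A_{\sigma_s}}n_s$. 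Plugging this sub-path into \eqref{eq:chain-bi-inv} yields $V_{n_s}(A_{\sigma_s}\cdots A_{\sigma_1}x)\leq V_{n_0}(x)$ for all $x$, and evaluating at $x=\bar x$, together with the hypothesis $\bar x\in\cap_{l}\{x\,|\,V_l(x)\leq\alpha\}$ (so in particular $V_{n_0}(\bar x)\leq\alpha$), gives
$$V_{n_s}\!\left(A_{\sigma_s}\cdots A_{\sigma_1}\,\bar x\right)\ \leq\ V_{n_0}(\bar x)\ \leq\ \alpha ,$$
so $A_{\sigma_s}\cdots A_{\sigma_1}\bar x\in\{x\,|\,V_{n_s}(x)\leq\alpha\}\subseteq\cup_{l=1,\ldots,k}\{x\,|\,V_l(x)\leq\alpha\}$, which is exactly the conclusion.

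I do not expect a genuine obstacle here; the argument is essentially bookkeeping, and notably it uses only the edge inequalities and positive definiteness (no homogeneity). The two points deserving care are getting the order/mirror conventions right so that the telescoped inequality \eqref{eq:chain-bi-inv} produces precisely the product $A_{\sigma_s}\cdots A_{\sigma_1}$ and not its reverse, and handling the degenerate cases ($s=0$, $\bar x=0$, $\alpha=0$, or an intermediate product sending $x$ to $0$) — all harmless because each $V_l$ vanishes at the origin, so \eqref{eq:chain-bi-inv} holds (non-strictly) on all of $\re^n$ rather than only off the origin.
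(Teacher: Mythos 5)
Your proof is correct and is exactly the argument the paper leaves implicit (its own proof reads simply ``this is an obvious consequence of the definition of Lyapunov inequalities''): use path-completeness to extract a path reading $\sigma_1\cdots\sigma_s$, telescope the edge inequalities along it, and bound $V_{n_0}(\bar x)$ by $\alpha$ via the intersection hypothesis. Your attention to the label/mirror ordering and to the degenerate cases (intermediate products vanishing, $s=0$) is careful bookkeeping that the paper omits but does not change the substance.
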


\begin{proof} \rmjj{This is an obvious consequence of the definition of Lyapunov inequalities.}
%Let $\bar{x}\in\cap_{l=1,\ldots,k} \{x| \ V_l(x)\leq\alpha\}$ and consider an arbitrary finite product $A_{\sigma_s}\cdots A_{\sigma_1}$. Since the graph is path-complete, there is a path in $G$ with the labels $\sigma_s \cdots \sigma_1$, starting from a node $i$ and ending in a node $j$. Chaining the Lyapunov inequalities associated with the edges of this path we see that we must have $V_j(A_{\sigma_s}\cdots A_{\sigma_1}\bar{x})\leq V_i(\bar{x})$. Since $V_i(\bar{x})\leq \alpha$, we get that $V_j(A_{\sigma_s}\cdots A_{\sigma_1}\bar{x})\leq \alpha$ and hence $A_{\sigma_s}\cdots A_{\sigma_1} \bar{x}\in\cup_{l=1,\ldots,k} \{x| \ V_l(x)\leq\alpha\}.$
\end{proof}

Knowing that points in the intersection of the level sets of $V_i$ never leave their union \aaa{may be good enough for some applications}. Nevertheless, one may be interested in having a true invariant set. This is achieved in the following construction, which is completely explicit but comes at the price of increasing the complexity of the set.

\begin{theom}\label{thm:constructing.invariant.set}
For a finite set of matrices $\mathcal{A}=\{A_1,\ldots,A_m\}$ and a scalar $\gamma$, let $\mathcal{A}_\gamma\mathrel{\mathop:}=\{ \gamma
A_{1},\ldots,\gamma A_{m}\}$. Consider any path-complete graph $G$ with edges labeled by the matrices in $\mathcal{A}_\gamma$. Suppose there exist Lyapunov functions $V_1,\ldots,V_k$, one per node of $G$, that satisfy the inequalities imposed by the edges of $G$ \rmj{with $\gamma>1.$} Then one can explicitly write down (from the Lyapunov functions $V_1,\ldots,V_k$ and matrices $A_1,\ldots,A_m$) a single Lyapunov function $W(x)$, which is a sum of pointwise maximum of quadratic functions, and satisfies 
\begin{equation}\label{eq:Wdot.negative}
W(A_ix)< W(x), \forall i\in\{1,\ldots,m\}, \forall x\neq 0.
\end{equation}

%
%
%Let $G$ be any path-complete
%graph whose edges are labeled with words from
%$\mathcal{A}_\gamma^*$. If there exist positive, continuous, and
%homogeneous\footnote{The requirement of homogeneity can be
%replaced by radial unboundedness which is implied by homogeneity
%and positivity. However, since the dynamical system in
%(\ref{eq:switched.linear.sys}) is homogeneous, there is no
%conservatism in asking $V_i(x)$ to be homogeneous.} functions
%$V_i(x)$, one per node of the graph, such that
%$\{V_{i}(x)~|~i=1,\ldots,\left\vert N\right\vert \}$ is a graph
%Lyapunov function associated with $G(N,E)$, then
%$\rho(\mathcal{A})\leq\frac{1}{\gamma}$.
\end{theom}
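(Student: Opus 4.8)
The plan is to build the common Lyapunov function $W$ by ``summing along all finite paths'' in the graph, exploiting the fact that path-completeness guarantees every switching sequence is traced (as a subword) by some path, while the scaling factor $\gamma>1$ gives us room to beat the extra $\gamma$ that accumulates when a path is longer than the word it must cover. Concretely, for each node $l$ let $V_l$ be the (quadratic, or pointwise-max-of-quadratic) Lyapunov function attached to it, and for a word $v=v_1\cdots v_t\in\setmat^*$ and a node $l$, define $V_{l,v}(x):=V_{l}(A_v x)$ where $A_v=A_{v_1}\cdots A_{v_t}$ is the \emph{unscaled} product. The candidate is
\begin{equation}\label{eq:Wdef}
W(x):=\max_{l=1,\ldots,k}\ \max_{v\in \setmat^{\le T}}\ \gamma^{|v|} V_{l,v}(x),
\end{equation}
for a suitably large but finite horizon $T$ (to be pinned down); one then checks this is a finite pointwise maximum of quadratics, hence a legitimate homogeneous positive definite Lyapunov function, and the ``sum of pointwise maxima'' phrasing in the statement is accommodated by instead summing the contributions grouped by node, $W(x)=\sum_{l} \max_{v}\gamma^{|v|}V_{l,v}(x)$. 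The point is that $W$ dominates all the $V_l$ (take $v$ empty), so finiteness on the relevant region will follow from the decrease property.

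The key step is the decrease inequality. Fix $i\in\{1,\ldots,m\}$ and $x\neq 0$, and let $(l^\star,v^\star)$ attain the maximum in $W(A_ix)$, i.e. $W(A_ix)=\gamma^{|v^\star|}V_{l^\star}(A_{v^\star}A_i x)=\gamma^{|v^\star|}V_{l^\star}(A_{v^\star i'}x)$ where $v^\star i'$ denotes appending the label $i$ at the end. Now apply path-completeness to the word $v^\star i$ (of length $|v^\star|+1$): there is a directed path $\pi$ in $G$ whose concatenated edge-labels contain $v^\star i$ as a subword. Walking backwards along $\pi$ and repeatedly applying the edge inequalities $V_{\text{head}}(\gamma A_j x)< V_{\text{tail}}(x)$ (these are exactly the inequalities guaranteed for the scaled matrices $\gamma A_j$), the product of matrices read off the prefix of $\pi$ sitting ``after'' the occurrence of $v^\star i$ telescopes: one obtains a node $l'$ and a word $u$ (the portion of $\pi$'s labels strictly preceding the matched copy of $v^\star i$) with
\[
\gamma^{|u|+|v^\star|+1}\,V_{l'}\big(A_{u}A_{v^\star}A_i x\big)\ \le\ \gamma^{|v^\star|+1}\,V_{l^\star}\big(A_{v^\star}A_i x\big)\ \cdot\ \gamma^{-(|v^\star|+1)}\cdot\gamma^{|v^\star|+1}
\]
— more cleanly: each traversed edge contributes a strict decrease and converts one factor of $\gamma A_j$ into $A_j$ at the cost of one $\gamma$, so $\gamma^{|v^\star|+1}V_{l^\star}(A_{v^\star i'}x)\le V_{l''}(A_{w}x)$ for the start node $l''$ of $\pi$ and $w$ the full label-word of $\pi$, with $A_w$ a product containing $A_{v^\star}A_i$. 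Since $|w|\ge |v^\star|+1 = |v^\star i|$, there is at least one ``spare'' factor of $\gamma$, and since $V_{l^\star}(A_{v^\star i'}x)=\gamma^{-|v^\star|}W(A_ix)\cdot$(stuff) this chain of inequalities closes the loop to give $W(A_ix)\le \gamma^{-1}\cdot(\text{a term that is} \le W(x))< W(x)$, using $\gamma>1$ strictly. The bookkeeping that makes this rigorous is choosing $T$ large enough that the path $\pi$ produced for the length-$(T{+}1)$ word never needs to be ``unrolled'' beyond horizon $T$ again — a standard pumping/pigeonhole argument on the finite graph bounds the relevant path length by something like $|V(G)|\cdot(|v^\star|+1)$, so one fixes $T$ from the start to absorb this.

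I expect the main obstacle to be exactly this last point: ensuring $W$ is a \emph{finite} max (equivalently, that the supremum over all words in \eqref{eq:Wdef} is attained at bounded length) and that the $\gamma$-bookkeeping genuinely yields a \emph{strict} decrease uniformly in $x$ rather than just $W(A_ix)\le W(x)$. Strictness should come for free from the strict edge inequalities combined with compactness of the unit sphere (the decrease factor is a continuous positive-definite-ratio bounded away from $1$), but one must be careful that the index $(l^\star,v^\star)$ realizing the outer max varies with $x$; the clean way is to prove the pointwise bound $W(A_ix)\le \gamma^{-1} W(x)$ directly for every $x$ via the telescoping above — which is actually non-strict-looking but becomes strict because $\gamma^{-1}<1$ — and then \eqref{eq:Wdot.negative} follows with room to spare. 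A secondary check is well-definedness when some product $A_{v}$ is singular (then $V_{l,v}$ is only positive \emph{semi}definite); this is harmless since we only need $W$ positive definite, which is inherited from the $v=\varnothing$ term $\max_l V_l$.
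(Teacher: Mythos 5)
Your overall strategy---a $\gamma$-weighted maximum of the $V_l$ pulled back along all products up to a finite horizon $T$---is a genuinely different construction from the paper's, and the ``interior'' half of your decrease argument is fine: whenever the maximizer in $W(A_ix)$ has $|v^\star|<T$, the term $\gamma^{|v^\star|+1}V_{l^\star}(A_{v^\star}A_ix)$ already appears in $W(x)$, giving $W(A_ix)\le\gamma^{-1}W(x)$ with no appeal to the graph at all. The genuine gap sits exactly where all the content lies: the boundary terms $|v^\star|=T$. There, path-completeness hands you a path reading $v^\star i$, and chaining the edge inequalities yields $V_b\bigl((\gamma A)_{v^\star i}\,x\bigr)<V_a(x)$ for the endpoints $a,b$ of \emph{that path}---not for the node $l^\star$ realizing your maximum. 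To convert a bound on $V_b(A_{v^\star i}x)$ into one on $V_{l^\star}(A_{v^\star i}x)$ you must pay the mutual equivalence constant $\xi=\max_{i,j}\beta_i/\alpha_j$ of the Lyapunov functions over the unit sphere, and $T$ must then be chosen large enough that the accumulated power of $\gamma$ beats $\xi$. That constant is what actually pins down $T$, and it never appears in your write-up; your proposed pigeonhole bound $|V(G)|\cdot(|v^\star|+1)$ on path lengths addresses a non-issue (with single-matrix labels a sub-path reading a word of length $s$ has length $s$) and does not resolve the circularity that covering words of length $T+1$ would seem to require horizon $T+1$. The displayed ``telescoping'' inequality, which multiplies and divides by $\gamma^{|v^\star|+1}$ and silently replaces $V_{l'}$ by $V_{l^\star}$, is not a derivation.

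For comparison, the paper sidesteps this bookkeeping: it uses the $V_l$ only once, to extract the uniform bound $\|A_{\sigma_s}\cdots A_{\sigma_1}\|\le\xi^{1/d_m}\gamma^{-s}$ of (\ref{eq:bound.on.spectral.norm}), picks $r$ so that every product of length $r$ is a Euclidean contraction, and sets $W(x)=\sum_{t=0}^{r-1}\max_{\sigma}\|A_{\sigma_t}\cdots A_{\sigma_1}x\|^2$, for which the decrease is a one-line telescope. That route also delivers the literal conclusion---$W$ is a sum of pointwise maxima of \emph{quadratics} even when the $V_l$ are general homogeneous functions---whereas your $W$ inherits the functional class of the $V_l$ and matches the statement only in the all-quadratic case. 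Your construction can be completed, but only after importing the same constant $\xi$ that drives the paper's proof.
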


\begin{proof}
Since the Lyapunov functions $V_i$ are positive definite and continuous, there exist positive scalars $\alpha_i, \beta_i$, such that  $$\alpha_i\leq V_i(x)\leq \beta_i,$$ for all $x$ with $||x||=1$ and for $i=1,\ldots,k$. The scalars $\alpha_i,\beta_i$ can be explicitly computed by minimizing and maximizing $V_i$ over the unit sphere. (In fact, one can show that a finite upper bound on $\beta_i$ and a finite lower bound on $\alpha_i$ can be found by solving two polynomially-sized sum of squares programs; such bounds would be sufficient for our purposes.) Let
\begin{equation}\label{eq:xi}
\xi\mathrel{\mathop:}=\max_{i,j\in\{1,\ldots,k\}} \frac{\beta_i}{\alpha_j}.
\end{equation}
In \cite{JSR_path.complete_journal} (Equation 2.4 and below), the following upper bound is proven on the spectral norm of an arbitrary product of length $s$:

\begin{equation}\label{eq:bound.on.spectral.norm}
||A_{\sigma_s}\cdots A_{\sigma_1} ||\leq \xi^{\frac{1}{d_m}}\frac{1}{\gamma^s},
\end{equation}
where $d_m$ is the maximum degree of homogeneity of the functions $V_i$. Let $r$ be the smallest integer $s$ that makes the right hand side of the previous inequality less than one. Note that $r$ can be explicitly computed from $\gamma$ and $V_1,\ldots,V_k$, once the Lyapunov inequalities are solved. Let $E(x)\mathrel{\mathop:}=||x||^2.$ Then we claim that 
\begin{eqnarray}\nonumber
W(x)=E(x)+\max_{{i}\in\{1,\ldots,m\}}E(A_ix)+\max_{{i,j}\in\{1,\ldots,m\}^2}E(A_iA_jx)\\+\cdots+\max_{\sigma\in\{1,\ldots,m\}^{r-1}}E(A_{\sigma_{r-1}}\cdots A_{\sigma_1}x)
\end{eqnarray}
satisfies  (\ref{eq:Wdot.negative}). Indeed, for any $l\in\{1,\ldots,m\}$ and any $x\neq 0,$ 
\begin{equation}\nonumber
\begin{array}{ll}
W(A_lx)&= E(A_lx)+ \max_{{i}\in\{1,\ldots,m\}}E(A_iA_lx)\\ \ &+\cdots+\max_{\sigma\in\{1,\ldots,m\}^{r-1}}E(A_{\sigma_{r-1}}\cdots A_{\sigma_1}A_lx)\\ \ &\leq \max_{{i}\in\{1,\ldots,m\}}E(A_ix) +\cdots \\ \ &+\max_{\sigma\in\{1,\ldots,m\}^{r}}E(A_{\sigma_{r}}\cdots A_{\sigma_1}x)\\ 
&=W(x)-E(x)+\max_{\sigma\in\{1,\ldots,m\}^{r}}E(A_{\sigma_{r}}\cdots A_{\sigma_1}x)\\ 
&< W(x), 
\end{array}
\end{equation}
where the last inequality follows from (\ref{eq:bound.on.spectral.norm}) and the definition of $r$. 
\end{proof}

\rmjj{Note that we did not assume in the theorem above that the Lyapunov functions are quadratic, and indeed the proof works for general Lyapunov functions as considered in this paper.}
%AAA: perhaps write a comment saying that this "bi-invariance" concept is interesting and can potentially lower the complexity of Lyapunov functions. 

%By contrasting Proposition~\ref{prop:bi.invariance} and Theorem~\ref{thm:constructing.invariant.set}, we see that 
%
%An interesting observation here is that one can prove stability with much simpler Lyapunov functions if instead of ``invaraince'

\section{Conclusion\aaa{s}}\label{section-conclusion}
%\rmj{
There has been a \aaa{surge of research activity} in the last fifteen years to derive stability conditions for \mrr{switched systems}.  In this work, we took \aaa{this research direction} one step further. \aaa{We showed that \emph{all} stability-proving Lyapunov inequalities within a broad and canonical family can be understood by a single language-theoretic framework, namely that of path-complete graphs.} \aaa{Our work also showed that one can always use the Lyapunov functions associated with the nodes of a path-complete graph to construct an invariant set for the switched system.}
%Instead of characterizing stable \mrr{switched systems}, our goal was to describe all such characterizations with a single tool.  We have shown that the recently introduced path-complete graphs framework indeed encapsulates all the LMI conditions for stability.
%We proved a similar result for \conicnorm{} Lyapunov inequalities, which are useful stability conditions for positive \mrr{switched systems}. 
As explained \aaa{before}, our results are not only relevant for proving stability of \mrr{switched systems}, but also for the \aaa{goal} of approximating the joint spectral radius \aaa{of a set of matrices}.

A corollary of our main result is that \aaa{testing whether a set of equations provides a valid sufficient condition for stability is PSPACE-complete, and hence intractable in general}. In practice, however, one can choose to work with a fixed set of path-complete graphs whose path-completeness has been certified a priori. In view of this, we believe it is worthwhile to systematically compare the performance of different path-complete graphs, either with respect to all input matrices $\{A_i\}$, or those who may have a specific structure.  \rmjj{We leave that for further work.}

\rmjj{This note leaves open many questions, on which we are actively working: For example, our main result is devoted to LMI Lyapunov criteria.  Intuitively, one might expect that it would be valid for much more general Lyapunov functions.  However, our proof uses algebraic properties of these functions, so that it is not clear how to generalize it to more general families.}
% This might seem as a negative result, because \aaa{it strongly suggests that} one cannot algorithmically and efficiently recognize \emph{all the valid LMI-based conditions}.  However, we believe that more relevant for engineering purposes is that we now have a complete understanding of the whole set of stability criteria.}

%Our work leads to several interesting open questions:
%It is natural to wonder whether there are other families of Lyapunov functions such that Theorem \ref{theo-stab-implies-pc} fails to be true.  In a previous conference version of this work \cite{ajprrocond}, we proved a similar result as Theorem \ref{theo-stab-implies-pc} for other families of Lyapunov inequalities, which are only valid for nonnegative matrices, and can be solved efficiently with Linear Programming. However, other similar Lyapunov inequalities are used, for \mrr{switched systems} stability, for which it is not clear whether a similar result can be proved (for example \emph{Complex Polytope Lyapunov functions}, see \cite{GZalgorithm,jungersprotasov09}). \\ In our view, our work also paves the way for a better use of these stability criteria: since we now have a complete understanding of the nature of valid criteria, one could imagine `meta-optimization' procedures, where the optimization problem coming from the path-complete graph would be itself optimized, by leveraging the knowledge of the particular application, or any other constraint on the system.

\section{Acknowledgement}
We would like to thank Marie-Pierre B\'eal, Vincent Blondel, and Julien Cassaigne for helpful discussions leading to Theorem \ref{thm-pspace}.

\bibliographystyle{alpha}        % Include this if you use bibtex 

\bibliography{references}

\end{document}